\theoremstyle{definition}
\newtheorem{definition}{Definition}[section]
\newtheorem{question}{Question}
\theoremstyle{plain}
\newtheorem{theorem}[definition]{Theorem}
\newtheorem{proposition}[definition]{Proposition}
\newtheorem{lemma}[definition]{Lemma}
\theoremstyle{remark}
\newtheorem{remark}[definition]{Remark}
\newtheorem{example}[definition]{Example}
\DeclareMathOperator{\Hom}{Hom}
\DeclareMathOperator{\GHA}{GHA}
\DeclareMathOperator{\Spec}{Spec}
\DeclareMathOperator{\Gal}{Gal}
\DeclareMathOperator{\GL}{GL}
\DeclareMathOperator{\Aut}{Aut}
\DeclareMathOperator{\End}{End}
\DeclareMathOperator{\Ha}{H}
\DeclareMathOperator{\id}{id}
\DeclareMathOperator{\HG}{HG}
\newcommand{\Oh}{\mathscr{O}}
\newcommand{\bin}[2]{\genfrac{(}{)}{0pt}{}{#1}{#2}}
\newcommand{\bx}{{\bar{X}}}
\begin{document}
\title{A theory of Galois descent for finite inseparable extensions}
\author{Giulia Battiston}
\date{\today}
\address{Mathematisches Institut, Albert-Ludwigs-Universität, Eckerstr. 1, 79104 Freiburg, Germany}
\thanks{This work was supported by GK1821 ``Cohomological methods in Geometry''}
\subjclass[2010]{14G17, 14A15, 12F15}
\email{giulia.battiston@math.uni-freiburg.de}

\begin{abstract}We present a generalization of Galois descent to finite modular normal field extension $L/K$, using the Heerma--Galois group $\Aut(L[\bx]/K[\bx])$ where $L[\bx]=L[X]/(X^{p^e})$ and $e$ is the exponent of $L$ over $K$.
\end{abstract}
\maketitle

If $A\to B$ is a ring homomorphism, the descent theory of $B$ over $A$ studies when a $B$-algebraic objects is defined over $A$, that is, comes by base change from an $A$-algebraic object.
In case $A=K$ and $B=L$ are fields and $L/K$ is a finite Galois field extension of Galois group $G$, it is a classical result (see for example \cite[Sec.~14.20]{goe}) that algebraic objects over $L$ with a suitable $G$-action are exactly those defined over $K$. 
The goal of this article is to extend this result to finite normal possibly non separable extensions $L/K$: of course $G=\Aut(L/K)$ will not do the job, as $K$ is not in general the fixed field of such a group. Instead, following the ideas of Heerma (see \cite{Hee}), we define the \emph{Heerma--Galois group} to be $\HG(L/K)=\Aut(L[\bx]/K[\bx])$ where $L[\bx]=L[X]/(X^{p^e})$ (and similarly for $K[\bx]$) and where $e$ is such that $L^{p^e}$ is separable over $K$. 

Our main result is then that if $L/K$ is a finite normal modular (see Definition~\ref{modular}) field extension, an algebraic object over $L$ is defined over $K$ if and only if \emph{its base change to $L[\bx]$} admits a suitable $\HG(L/K)$-action (see Theorems~\ref{de-svs}, \ref{de-ss} and \ref{de-vs} for a more precise statement).

\

There exist other possible approaches to a Galois-like theory for inseparable extensions, that may be in principle used to describe the descent theory of $L$ over $K$. The most interesting alternative choices that have been proposed, over time, of an object replacing the Galois group are three: the first one is given by considering  the the group of (bounded rank) higher derivations on $L$ relative to $K$ (see \cite{hd}), the second one concerns the Galois--Hopf algebra $\GHA(L/K)$ as described in \cite{AS} and finally the third one uses the so called automorphism scheme $\underline{\Aut}(L/K)$ (see \cite{beg}), which is a $K$-group scheme representing the functor $T\mapsto\Aut_T(L\times_K T)$ for every $K$-scheme $T$, or its truncated version $\underline{\Aut}_t(L/K)$ as defined by Chase in \cite{ch}.

Let me compare the choice of the  Heerma--Galois with its possible alternatives: first of all it extends the classical Galois theory, namely, when $e=0$ (that is $L/K$ is separable Galois) we find back exactly the Galois group and the theorems of Galois descent. As for the group of higher derivations, it can be seen as a subgroup of the Heerma--Galois group, but the latter is neater to handle and has a more algebraic flavour. Compared to the descent theory in \cite{AS}, the Galois--Hopf algebra approach does not need to base change the $L$-algebraic objects to $L[\bx]$ like we do. It moreover follows quite directly from faithfully flat descent theory, thanks to the fact that $L$ is a $\Spec(\GHA(L/K)^*)$-torsor and the duality between $\GHA(L/K)$-module and $\GHA(L/K)^*$-comodule structures on a $K$-vector space. On the other side the Galois--Hopf algebra acts via the endomorphisms and not via the automorphisms of such an $L$-algebraic object. This may not allow some tricks from Galois descent to work in this more general setting, a good example being the following: if $X$ is defined as the stabilizer in $\GL_n$ of some collection of sub-vector spaces $\{W_i\}$ of $L^n$, and $G$ is acting via automorphisms, for every $\sigma\in G$ one has that $\sigma(X)$ is the stabilizer of $\{\sigma(W_i)\}$, hence the problem of descending $X$ can be reduced to the invariance of  $\{W_i\}$ under the action of $G$. Of course, this does not apply if we are considering the action of a Hopf algebra via endomorphisms.
All in all, as $L[\bx]$ has still many desirable properties for a base ring (it is the infinitesimal deformation space of a field and an auto-injective ring, many of the schemes defined via functor of points are still representable, and so on), the loss of a field as base scheme seems little with respect to the gain of an action through the automorphisms, that potentially allows to extend many of the applications of separable Galois descent to the inseparable case.
As for the third alternative, the automorphism scheme is naturally related with the Heerma--Galois group as
\[\HG(L/K)=\underline{\Aut}(L/K)(K[\bx])=\underline{\Aut}_t(L/K)(K[\bx]).\]
It was already known that if $L/K$ is normal and modular, an action of $\Aut(L/K)$ induces an action of $\GHA(L/K)^*$ (in fact, of any group scheme under which $L/K$ is a torsor, see Theorems~\ref{torsor} and \ref{factor}), hence, by the theory of descent along torsors, a descent data. Our main result proves that in fact it is enough to have an action of its $K[\bx]$-points in order to have descent over $K$.

The article is divided as follow: the first section is devoted to define the Heerma--Galois group and to explore its relation with the group of higher order differential operators. The second one applies the results of \cite{AS} in order to obtain descent conditions on $L$-vector spaces and consequently on algebras and more in general separated schemes of finite type. In the third section we explain the connections between the Heerma--Galois group and the (truncated) automorphism scheme. The fourth section is devoted to a little generalization of the results of the second one, namely proving that it is enough to have a $\HG$-action on a (possibly non trivial) infinitesimal deformation of the algebraic object we are interested in. Finally, in the last section we collect some natural questions that still need to be inquired.

\subsection*{Acknowledgements} I would like to thank Lenny Taelman for pointing out to me the existence of the automorphism scheme and Moshe Kamensky for providing the reference of Pillay and Poizat. I am indebted to the notes of Milne (\cite[Ch.~16]{mil}) who have widely influenced the structure of the section on descent theory.
\subsection*{Notation} If $A\to B$ is a ring homomorphism and $X$ is a scheme over $\Spec A$, we use the notation $X\otimes_AB$ to abbreviate $X\times_{\Spec A}\Spec B$. By an \emph{algebraic object} $R$ over a ring $A$ we always mean that $R$ is an $A$-module with some additional algebraic structure (for example a ring, a Hopf algebra, a Lie algebra and so on).

\section{The Heerma--Galois group}

Let us fix $K$ a field of positive characteristic, and an algebraic closure $\bar{K}$ of $K$. Let $\alpha\in\bar{K}$ be an algebraic element which is inseparable over $K$. The elementary extension $K(\alpha)$ is the simpler example of (finite purely inseparable) modular extensions of finite exponent: 

\begin{definition}[{ see \cite[Thm.~1]{Swee} and \cite[def~1.1]{devmo}}]\label{modular}
An algebraic extension $L$ of $K$, is said to be of \emph{finite exponent} if there is a positive natural number $n$ such that $L^{p^n}$ is separable over $K$. The minimal of such $n$ is said to be the \emph{exponent} of $L$ over $K$.

An algebraic extension $L$ of $K$ is \emph{modular} if it is isomorphic to a (possibly infinite) tensor product over $K$ of elementary extensions (that is, of extensions of $K$ generated by one element).
\end{definition}

We will discuss Galois descent when $L/K$ is a finite modular normal extension. Note though that this condition can be achieved in many situations as if $L/K$ is purely inseparable, or finite and normal, then by \cite[Thm.~6]{Mor} there exists a \emph{modular closure of $L$ with respect to $K$} which is the smallest field containing $L$ which is modular over $K$. Moreover the modular closure of $L$ over $K$ is finite and purely inseparable over $L$ (in particular it is finite over $K$ whenever $L$ is) and its exponent over $K$ is equal to the exponent of $L$ over $K$.

\

For the rest of this article, $L$ will always be a finite normal modular extension of $K$. In this setting, Heerema in \cite{Hee} constructed a Galois group theory as follows: let us denote by $e\in\mathbb{N}$ the exponent of $L$ over $K$ and let $L[\bx]=L[X]/(X^{p^e})$
\footnote{Note that in \cite{Hee} $L[\bx]$ is defined as the truncated polynomial $L[X]$ modulo $X^{p^{e-1}+1}$ rather than $X^{p^e}$. It is straightforward to check that this does not make any difference for our results, and the latter seems a better choice for this article.}
. Let 
\begin{equation}\label{A}
A=\{\phi\in\Aut(L[\bx])\mid \phi(\bx)=\bx\},
\end{equation} 
where $\Aut(L[\bx])$ is the group of ring automorphisms of $L[\bx]$ and for any $G\subset A$ subgroup, let us denote 
\[L^G=\{a\in L\mid \alpha(a)=a \;\forall\alpha\in G\}.\]
Then by \cite[Thm.~3.1]{Hee}, there exists a subgroup $G\subset A$ so that $K=L^G$. As $G$ fixes $K$ and $\bx$, we can simply consider $G$ to be $\Aut(L[\bx]/K[\bx])$, we will call it the \emph{Heerma--Galois group of $L$ over $K$} and denote it with $\HG(L/K)$ or simply $\HG$ when $L$ and $K$ are fixed.

\begin{remark}
Unlike the classical Galois theory, it is not true that the $L[\bx]^{\HG}=K[\bx]$, as for example $a_0+a_{p^{e-1}}\bx^{p^{e-1}}$ is fixed by $\HG$ for every $a_{p^{e-1}}\in L$ and $a_0\in K$.
\end{remark}

\begin{definition}
A \emph{higher derivation of rank $n$} on $L$ is a family of additive maps $\{d^{(k)}:L\to L\}$ for $0\leq k\leq n$ such that
\begin{itemize}
\item[i)] $d^{(0)}=\id$
\item[ii)] for every $0\leq k<n$ and $a,b\in L$ one has
\[d^{(k)}(ab)=\sum_{i=0}^k d^{(i)}(a)d^{(k-i)}(b).\]
\end{itemize}
A higher derivation $\{d^{(k)}\}$ is \emph{relative to $K$} if every $d^{(k)}$ is a $K$-linear map (which by $(ii)$ is equivalent to ask that $d^{(k)}(a)=0$ for every $a\in K$ and every $k>0$). Let us denote by $\mathcal{H}^{\leq n}(L/K)$ the set of higher derivations of rank $n$ on $L$ relative to $K$. It admits a group structure defined as $\{d^{(k)}\}\cdot\{e^{(k)}\}=\{f^{(k)}\}$ with $f^{(k)}=\sum_{i=0}^k d^{(i)}e^{(k-i)}$.
\end{definition}

For our purposes, let us unravel the main ingredient in the proof of \cite[Thm.~3.1]{Hee}. It is a classical result that if $L$ is modular over $K$ of exponent $e$, then $K$ is the field of constants of one higher derivation. That is, there exists $\{d^{(k)}\}$ higher derivation on $L$ such that $K=\{a\in L\mid d^{(k)}(a)=0 \; \forall k>0\}$. Moreover, this higher derivation can be taken to be of rank $p^e-1$. Actually the two properties, of being modular of exponent $e$ and of being the field of constants of one (and hence of all) higher derivations of rank $p^e-1$  on $L$ relative to $K$ are equivalent by \cite[Thm.~1]{Swee}. The  core of the Galois correspondence in \cite{Hee} sits then in the fact that the morphism
\begin{equation}\label{delta}
\delta\mathcal{H}^{\leq p^e-1}(L/K)\to \HG(L/K)
\end{equation}
defined as $\delta(\{d^{(k)}\})(\bx)=\bx$ and $\delta(\{d^{(k)}\})(a)=\sum_{k=0}^{p^e-1} d^{(k)}(a)\bx^k$ for $a\in L$ is an isomorphism onto \[A_0=\{\phi\in A\mid \phi(a)-a\in (\bx)\;\forall a\in L\},\]
thus allowing to see higher derivations of rank $p^e-1$ as a subgroup of $\HG(L/K)$.

\section{Descending objects from $L$ to $K$}\label{sec:desc} 

If $V$ is a $K$-vector space, and $W$ is a sub-vector space of $V_L=V\otimes_KL$, it is natural to ask whether $W$ is \emph{defined over $K$}, namely whether there exists $W_0\subset V$ a $K$-sub-vector space such that $W=W_0\otimes_KL$. More generally, given a $L$-vector space $W$ we say that $W_0\subset W$ is a \emph{$K$-form} if $W_0$ has a structure of $K$-vector space and the morphism $W_0\otimes_KL\to W$ given by $w\otimes a\mapsto a\cdot w$ is an isomorphism of $L$-vector spaces. 

Of course, every $L$-vector space admits a $K$-form, but this is not the case if we endow $W$ with an additional algebraic structure: for example if $W$ is an $L$-algebra, we say that $W_0\subset W$ is a \emph{$K$-form} for $W$ (as an algebra) if $W_0$ has a structure of $K$-algebra and the morphism $W_0\otimes_KL$ is an isomorphism of $L$-algebras. Indeed, a $K$-form of an algebra is also a $K$-form of the underlying vector space, but the converse does not hold. 
\begin{remark}
Another interesting property that one may wish to descend from $L$ to $K$, as suggested in the beginning of the paragraph, is that of an \emph{embedded} $K$-form, namely we want not only to descend an object, in our example $W$, but also its embedding in a bigger object that is defined over $K$, in our example $W\subset V\otimes_KL$. More in general one is interested in descending morphisms $\phi:V\otimes_KL\to V'\otimes_KL$ between objects that are defined over $K$, namely understanding whether there is a morphism $\phi_0:V\to V'$ such that $\phi=\phi_0\otimes id$.
\end{remark}

\subsection{Descent of sub-vector spaces}
We will focus first on the descent of vector spaces without additional algebraic structure. Before stating the theorem that will be central in proving our main result, let us introduce some notation. 
Let $L/K$ be a finite modular normal extension, let us once and for all fix a decomposition  
\begin{equation} \label{dec} 
L=K(\alpha_0)\otimes_KK(\alpha_1)\otimes_K\dotsb\otimes_KK(\alpha_m)
\end{equation}
with $\alpha_0$ separable over $K$ and $\alpha_i$ a $p^{n_i}$-th root of $a_i\in K$, for $i=1,\dotsc,m$. Let $G$ be the Galois group of $K(\alpha_0)$ over $K$, then we define $\Ha_0=K[G]$ and $H_i$ to be the Hopf algebra over $K$ defined as follows: the generators are $D_i^{(k)}$ with $k=1,\dotsc, p^{n_i}-1$, counit $\epsilon: \Ha_i\to K$ is given by $D_i^{(k)}=0$ if $k>0$ and $D_i^{(0)}=1$, multiplication given by 
\[D_i^{(h)}D_i^{(k)}=\bin{h+k}{h} D_i^{(h+k)}\] for $s+t<p^{n_i}$ and zero otherwise and comultiplication given by \[\Delta(D_i^{(k)})=\sum_{h=0}^kD_i^{(h)}\otimes D_i^{(k-h)}.\]

Let us finally define the \emph{Galois-Hopf algebra of $L$ over $K$} to be 
\[\GHA(L/K)=\Ha_0\otimes_K \Ha_1 \otimes_K\dotsb\otimes_K \Ha_m,\]
 then $\GHA(L/K)$ acts on $L$ via  
\[D_i^{(k)}(\alpha_i^l)=\bin{l}{k}\alpha_i^{l-k} \text{ and } D_g(\alpha_0)=g(\alpha_0),\]
where $D_g$, for $g\in G$, denote the generators of $K[G]$.
 Note that if $V$ is a $K$-vector space, the action of $\GHA(L/K)$ on $L$ that we just described induces a natural action on $V_L=V\otimes_KL$  simply by tensoring with the trivial action on $V$, and this action is $L$-semilinear:

\begin{definition}
Let $V$ be an $L$-vector space, then an action of $\GHA(L/K)$ is called \emph{$L$-semilinear} if $V$ is an $\GHA(L/K)$-module under this action and for every $D\in\GHA(L/K)$, $a\in L$ and $v\in V$ one has
\[D(a\cdot v)=\Delta(D)(a\otimes v).\]
\end{definition}

We have now all the definitions that we need in order to state the key theorem that relies $K$-forms and $\GHA(L/K)$-semilinear actions:

\begin{theorem}[{\cite[Lemma 1.2,3; Thm.~1.2,5; Thm.1.2,8]{AS}}] \label{hopf}Let $L$ be a normal modular finite field extension of $K$ and let $V$ a $L$-vector space on which $\GHA=\GHA(L/K)$ acts $L$-semilinearly. Let $\epsilon$ be the counit of $\GHA$, then if we define
\[V^{\GHA}=\{v\in V\mid D \cdot v=\epsilon(D)\cdot v, \; \forall D\in\GHA\},\]
$V^{\GHA}$ is a $K$-form for $V$.
\end{theorem}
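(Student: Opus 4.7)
The plan is to exploit the tensor decomposition of $\GHA(L/K)$ and reduce to elementary extensions, treating separately the separable and the purely inseparable simple cases. Since $\GHA(L/K)=\Ha_0\otimes_K\Ha_1\otimes_K\cdots\otimes_K\Ha_m$ mirrors the decomposition $L=K(\alpha_0)\otimes_K\cdots\otimes_K K(\alpha_m)$, the actions of the various $\Ha_i$ on $V$ pairwise commute. Setting $L_j=K(\alpha_0)\otimes_K\cdots\otimes_K K(\alpha_j)$, one first shows that $V^{\Ha_m}$ inherits an $L_{m-1}$-vector space structure on which the subalgebra $\GHA(L_{m-1}/K)=\Ha_0\otimes_K\cdots\otimes_K\Ha_{m-1}$ still acts $L_{m-1}$-semilinearly, and that $V^{\GHA(L/K)}$ equals $(V^{\Ha_m})^{\GHA(L_{m-1}/K)}$. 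By induction on $m$ one is reduced to the case $L=K(\alpha)$ elementary. If $\alpha$ is separable, $\GHA(L/K)$ is the group algebra $K[G]$ with $G=\Gal(L/K)$, an $L$-semilinear $K[G]$-action is exactly a classical semilinear $G$-action, and $V^{\GHA}=V^G$ is a $K$-form of $V$ by ordinary Galois descent (via Dedekind's linear independence of characters).

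The remaining case is $L=K(\alpha)$ purely inseparable with $\alpha^{p^n}=a\in K$, where $\Ha=\GHA(L/K)$ has $K$-basis $\{D^{(k)}\}_{0\le k<p^n}$ and $D^{(k)}(\alpha^l)=\binom{l}{k}\alpha^{l-k}$. One must show directly that the comparison map $\mu\colon V^{\Ha}\otimes_KL\to V$ is an isomorphism. Given $v\in V$, the $L$-semilinearity of the $\Ha$-action translates the hoped-for equality $v=\sum_{k=0}^{p^n-1}\alpha^kw_k$ with $w_k\in V^{\Ha}$ into the linear system obtained by applying each $D^{(j)}$ to both sides: using $D^{(j-i)}(w_k)=0$ for $j>i$ and $D^{(0)}(w_k)=w_k$, one finds $D^{(j)}(v)=\sum_{k\ge j}\binom{k}{j}\alpha^{k-j}w_k$. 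This system is upper triangular with unit diagonal, so it admits a unique solution, computed recursively starting from $w_{p^n-1}=D^{(p^n-1)}(v)$. Injectivity of $\mu$ then follows from the $K$-linear independence of $1,\alpha,\dotsc,\alpha^{p^n-1}$.

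The principal obstacle is to verify, at the end of the recursion, that each constructed $w_k$ actually lies in $V^{\Ha}$, i.e.\ is annihilated by every $D^{(j)}$ with $j>0$. This requires the divided-power identities $D^{(i)}D^{(j)}=\binom{i+j}{i}D^{(i+j)}$ together with the vanishing of binomial coefficients $\binom{p^n}{k}$ modulo $p$ for $0<k<p^n$, and is exactly the algebraic content of the fundamental theorem of Hopf modules specialized to $\Ha$. Once this is confirmed, $\mu$ is surjective and $V^{\Ha}\otimes_KL\simeq V$, which completes the inductive step and the proof.
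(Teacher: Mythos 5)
Your argument is essentially correct, but it takes a genuinely different route from the paper, which in fact does not prove Theorem~\ref{hopf} at all: the statement is imported from Allen--Sweedler, and the remark that follows it explains the intended conceptual proof, namely that an $L$-semilinear $\GHA(L/K)$-module structure on $V$ is the same as a comodule structure over the dual $\GHA(L/K)^*$, that $L$ is a $\Spec(\GHA(L/K)^*)$-torsor over $K$, and that the $K$-form then comes from descent along torsors. You instead give a direct, self-contained proof: reduce along the tensor decomposition $\GHA=\Ha_0\otimes_K\dotsb\otimes_K\Ha_m$ to elementary extensions (the passage to $V^{\Ha_m}$ as an $L_{m-1}$-vector space with a residual semilinear action is legitimate, since the tensor factors commute and $\Ha_m$ acts through $\epsilon$ on $L_{m-1}$), handle the separable factor by classical Galois descent, and settle the purely inseparable elementary case by the triangular recursion $w_j=D^{(j)}(v)-\sum_{k>j}\binom{k}{j}\alpha^{k-j}w_k$, which is the fundamental theorem of Hopf modules done by hand. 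The recursion does close: $D^{(i)}(w_j)=0$ follows from the identity $\binom{i+j}{i}\binom{k}{i+j}=\binom{k}{j}\binom{k-j}{i}$ together with the integer vanishing $\binom{k-j}{i}=0$ for $k-j<i$, which is also what handles the truncated products $D^{(i)}D^{(j)}=0$ for $i+j\ge p^{n}$ --- the vanishing of $\binom{p^n}{k}$ mod $p$ that you invoke is not really the relevant fact here. Two small imprecisions: injectivity of $\mu$ does not follow from the $K$-linear independence of $1,\alpha,\dotsc,\alpha^{p^n-1}$ alone, but from applying the $D^{(j)}$ in descending order to a vanishing combination $\sum_k\alpha^k u_k=0$ with $u_k\in V^{\Ha}$; and in the inductive step the elementary case must be applied over the base $L_{m-1}$ rather than $K$, which requires noting that $X^{p^{n_m}}-a_m$ stays irreducible over $L_{m-1}$ (this is where modularity enters). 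What your approach buys is explicitness, including a formula for the descent of each vector; what the paper's citation-plus-torsor remark buys is brevity and a formalism that descends additional algebraic structures for free, which the paper exploits again in Section~\ref{ffd}.
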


\begin{remark}
As noted in the introduction, the previous theorem is a direct application of faithfully flat descent theory, once remarked that $L$ is a torsor for $\GHA(L/K)^*$, the Cartan dual of $\GHA(L/K)$, hence a module structure of the latter on $V$ gives a comodule structure of the first and thus, by descent along torsors (see for example \cite[Sec.~14.21]{goe}), a $K$-form on $V$.
\end{remark}

In particular, if $V$ is a vector space over $K$ and we consider $V_L=L\otimes_K V$ with the natural $\GHA(L/K)$-action, we have that the subspaces of $V_L$ are defined over $K$ are exactly those invariant under the $\GHA(L/K)$-action.
We want now to translate this in a more Galois-like setting, namely proving the following

\begin{theorem}\label{de-svs}
Let $V$ be a vector space over $K$ and $L$ a finite modular normal extension, let $\HG(L/K)$ be the Heerma--Galois group of $L$ over $K$. Let $W$ be a sub-vector space of $V_L=L\otimes_k V$, then the following are equivalent:
\begin{itemize}
\item[i)] there exists a $K$-sub-vector space $W_0\subset V$ such that $W=L\otimes_k W_0$;
\item[ii)] $W$ is stable under the natural $\GHA(L/K)$-action over $V_L$;
\item[iii)] $W\otimes_LL[\bx]$ is stable under the natural $\HG(L/K)$-action on 
\[V_{L[\bx]}=V\otimes_L L[\bx]=V\otimes_KK[\bx].\]
\end{itemize}
\end{theorem}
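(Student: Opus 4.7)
The plan is to treat the equivalence in two steps: (i)~$\Leftrightarrow$~(ii) comes directly from Theorem~\ref{hopf}, while (ii)~$\Leftrightarrow$~(iii) is obtained by realising the generators of $\GHA(L/K)$ inside $\HG(L/K)$ via the isomorphism $\delta$ of (\ref{delta}) and the decomposition (\ref{dec}).

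The implications (i)~$\Rightarrow$~(ii) and (i)~$\Rightarrow$~(iii) are formal: if $W = W_0 \otimes_K L$, then both the $\GHA$-action on $V_L$ and the $\HG$-action on $V_{L[\bx]}$ are trivial on the $V$-factor and affect only the $L$- (resp.\ $L[\bx]$-) factor, so they preserve $W_0 \otimes_K L = W$ and $W_0 \otimes_K L[\bx] = W \otimes_L L[\bx]$ respectively. For (ii)~$\Rightarrow$~(i), I would apply Theorem~\ref{hopf} twice: first to $V_L$, where $V$ sits as a $K$-form on which $\GHA$ acts via the counit, so that dimension-counting forces $(V_L)^{\GHA} = V$; and second to $W$ itself, which by assumption carries an induced $L$-semilinear $\GHA$-action, making $W^{\GHA}$ a $K$-form of $W$. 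Since $W^{\GHA} = W \cap (V_L)^{\GHA} = W \cap V$, the subspace $W_0 := W \cap V \subset V$ satisfies $W_0 \otimes_K L = W$.

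The substance of the argument lies in (iii)~$\Rightarrow$~(ii). Recall that $\GHA(L/K) = \Ha_0 \otimes_K \Ha_1 \otimes_K \dotsb \otimes_K \Ha_m$ is generated as an algebra by the higher-derivation symbols $D_i^{(k)}$ for $i \geq 1$ together with the elements of $G = \Gal(K(\alpha_0)/K)$, so it suffices to check stability of $W$ under these generators. For each $i \geq 1$, the canonical higher derivation $\{D_i^{(k)}\}_k$ on $K(\alpha_i)$ extends trivially across the other factors of (\ref{dec}) to a higher derivation of rank at most $p^e - 1$ on $L$ relative to $K$; applying $\delta$ produces an element $\sigma_i \in A_0 \subset \HG$. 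Unwinding the definition of $\delta$ on an element $w \in W \subset V_L$ yields
\[
\sigma_i(w) = \sum_{k=0}^{p^{n_i} - 1} D_i^{(k)}(w) \, \bx^k \quad \text{inside } V_{L[\bx]},
\]
where $D_i^{(k)}$ denotes the action on $V_L$ induced by the trivial action on $V$ and the higher derivation on $L$. The hypothesis $\sigma_i(w) \in W \otimes_L L[\bx]$ then forces each coefficient $D_i^{(k)}(w)$ to belong to $W$. Similarly, each $\tau \in G$ lifts to an automorphism of $L$ acting trivially on the purely inseparable factors in (\ref{dec}), and hence to an element $\tilde\tau \in \HG$ fixing $\bx$; since $\tilde\tau(w) \in V_L$ has no higher-degree terms in $\bx$, the hypothesis forces $\tau(w) \in W$. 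Combining, $W$ is stable under all algebra generators of $\GHA(L/K)$, hence under the whole action.

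The main technical obstacle I anticipate is verifying the coefficient-extraction formula for $\sigma_i(w)$ with full precision, i.e.\ tracking the definition of $\delta$ through the base change $V_L \hookrightarrow V_{L[\bx]}$ and confirming that the $\HG$-action there acts only on the $L[\bx]$-factor. A secondary point is the existence of the lifts $\tilde\tau$ for separable Galois elements, which rests on the modularity of $L/K$ through the decomposition (\ref{dec}); both points are elementary but need to be handled carefully.
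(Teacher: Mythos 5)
Your architecture is the paper's: (i)~$\Rightarrow$~(ii),(iii) are formal, (ii)~$\Rightarrow$~(i) is Theorem~\ref{hopf} (your elaboration via $W^{\GHA}=W\cap(V_L)^{\GHA}=W\cap V$ is fine), and the substance of (iii)~$\Rightarrow$~(ii) is to realise the generators $D_i^{(k)}$ and $D_g$ of $\GHA(L/K)$ inside $\HG(L/K)$ via $\delta$ and read off coefficients of powers of $\bx$. However, there is a genuine gap at exactly the point you flagged as ``to be checked'': whenever $n_i<e$, the element $\sigma_i$ you describe does not exist. Padding the rank-$(p^{n_i}-1)$ higher derivation $\{D_i^{(k)}\}$ with zeros up to rank $p^e-1$ violates the Leibniz identity at $k=p^{n_i}$: as the paper's remark following this theorem points out, any higher derivation of rank $\geq p^{n_i}$ relative to $K$ satisfies $0=d^{(p^{n_i})}(a_i)=\bigl(d^{(1)}(\alpha_i)\bigr)^{p^{n_i}}$, forcing $d^{(1)}(\alpha_i)=0$, whereas $D_i^{(1)}(\alpha_i)=1$. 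Keeping the rank at $p^{n_i}-1$ does not help either, because the map $a\mapsto\sum_{k<p^{n_i}}D_i^{(k)}(a)\bx^k$ fails to be multiplicative in $L[X]/(X^{p^e})$: the cross terms of degree between $p^{n_i}$ and $p^e-1$ survive the truncation. So your displayed formula $\sigma_i(w)=\sum_{k<p^{n_i}}D_i^{(k)}(w)\bx^k$ does not define an element of $\HG(L/K)=\Aut(L[\bx]/K[\bx])$, and hypothesis (iii) cannot be applied to it.

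The repair is the one the paper makes: rescale the grading by setting $d_i^{(k)}=D_i^{(k/p^{e-n_i})}$ when $p^{e-n_i}\mid k$ and $d_i^{(k)}=0$ otherwise. This \emph{is} a higher derivation of rank $p^e-1$ (the overflow terms in the product now have degree $\geq p^{e-n_i}\cdot p^{n_i}=p^e$ and vanish in $L[\bx]$), so $\phi_i=\delta(\{d_i^{(k)}\})\in\HG(L/K)$, and one extracts $D_i^{(k)}(w)$ as the coefficient of $\bx^{kp^{e-n_i}}$ in $\phi_i(w)$ rather than of $\bx^k$. With this shift your coefficient-extraction argument, and the separate (unproblematic) treatment of the $D_g$ via $\phi_g$, go through and coincide with the paper's proof.
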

\begin{proof}
If $(i)$ holds, then clearly $(ii)$ and $(iii)$ hold as well. Moreover by Theorem \ref{hopf}, $(ii)$ implies $(i)$.
We are left to prove that $(iii)$ implies $(ii)$. Let us first fix some notation: recall that we have fixed a decomposition \ref{dec} of $L$. Let $D_g$, $g\in G$ be the generators of the Hopf algebra $\Ha_0=K[G]$ and let $\phi_g\in\HG(L/K)$ be defined as $g\otimes \id\otimes\dotsm\otimes \id$ on $L$ and $\phi_g(\bx)=\bx$. Let us moreover define the following higher derivations of rank $p^e-1$, denoted $\{d_i^{(k)}\}$, $i=1,\dotsc,m$ defined by $d_i^{(0)}=\id$ and 
\[
d_i^{(k)}=\begin{cases} D_{\id} \otimes D_1^{(0)} \otimes\dotsc D_{i-1}^{(0)}\otimes D_i^{(\frac{k}{p^{e-n_i}})}\otimes D_{i+1}^{(0)}\dotsc\otimes D_m^{(0)} & \text{ if } p^{e-n_i}\mid k  \\
0 & \text{ otherwise}\\
\end{cases}\]
(in short, we are shifting the action of the $D_i^s$s on $L$ so that they form a higher derivation of rank $p^e-1$ rather than $p^{n_i}-1$). 
Now that we have fixed the notation, we need to show that $W$ is stable under the action of the $D_g$s, for $g\in G$ and of the $D_i^{(k)}$s, for $i=1,\dotsc, m$, $k=1,\dotsc, p^{(n_i-1)}$ (by abuse of notation, we still denote by $D_g$ what should be denoted $D_g\otimes D_1^{(0)}\otimes\dotsb\otimes D_m^{(0)}$ and similarly for $D_i^{(k)}$).
Let $\phi_i=\delta(\{d_i^{(k)}\})\in\HG(L/K)$, and let us fix a basis $v_j$ of $V$ over $K$, a basis $w_s$ of $W$ over $L$. We will denote still by $v_j$ the $L[\bx]$-basis $v_j\otimes 1$ of $V\otimes_KL[\bx]$, and similarly for $w_s$. Let $w\in W$, $w=\sum_j \gamma_jv_j$, as $W\otimes_LL[\bx]$ is invariant under the action of $\HG(L/K)$ and the basis $v_j$ is stable under the action of $\HG(L/K)$, then 
\[\begin{split}
\phi_i(w)&=\sum_j\phi_i(\gamma_j)\phi_i(v_j)\\
&=\sum_j \phi_i(\gamma_j)v_j=\sum_k\sum_j D_i^{(k)}(\gamma_j)\bx^{kp^{e-n_i}} v_j\\
&=\sum_i\sum_s\beta_s^i\bx^iw_s
\end{split}\]
for some $\beta_s^i\in L$.

As $v_j\bx^i$ is a basis for $V\cdot\bx^i$ where $V\otimes L[\bx]=V\cdot 1\oplus V\cdot \bx\oplus \dotsc \oplus V\cdot \bx^{p^e-1}$, it follows that $D_i^{(k)}(w)=\sum_j D_i^{(k)}(\gamma_j)v_j=\sum_s\beta_s^{kp^{e-n_i}}w_s$. In particular, $W$ is stable under the action of the $D_i^{(k)}$s.
As for the $D_g$s, the proof goes similarly considering $\phi_g\in G$ defined as $\phi_g(a)=g(a)$ for $a\in L$ and $\phi_g(\bx)=\bx$.
\end{proof}

\begin{remark}
The previous proof needs a rather involved notation because $n_i$ may differ from $e$ for some $i$ and thus one needs to shift the $D_i^{(k)}$: in order to understand why, let $L=K(\alpha)$ with $\alpha$ $p$-th root of $a\in K$, that is let $e=1$. Let $\{d^{(k)}\}$ a higher derivation of rank at least $p$, then by Lucas' theorem $0=d^{(p)}(a)=d^{(p)}(\alpha^p)=(d^{(1)}(\alpha))^p$. Hence, every higher derivation of rank at least $p$ must have $d^{(1)}=0$, thus there is no way to define a $\{d^{(k)}\}\in\mathcal{H}^{p^2-1}$ such that $d^{(1)}=D^{(1)}$.
\end{remark}

\subsection{Descending sub-schemes and morphisms}

Once understood the descent of sub-vector spaces, the descent of sub-schemes  follows directly from the fact that the closure of a sub-vector space of a ring $R$ under multiplication, that is the property of being an ideal, can be checked after base change:

\begin{lemma}\label{de-ss}
Let $X$ be a separated scheme over $K$, $L$ a finite normal modular extension of $K$ and $Z$ a closed sub-scheme of $X_L=X\otimes_KL$. Then there exists a closed sub-scheme $Z_0\subset X$ such that $Z=Z_0\otimes_KL$ if and only if $Z\otimes_LL[\bx]$ is stable under the natural action of $\HG(L/K)$ over $X_L\otimes_LL[\bx]$.
\end{lemma}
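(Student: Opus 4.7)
The plan is to reduce to the affine case and apply Theorem~\ref{de-svs}. One direction is immediate: if $Z=Z_0\otimes_K L$ for some closed $Z_0\subset X$, then $Z\otimes_L L[\bx]=Z_0\otimes_K L[\bx]$, and since the $\HG(L/K)$-action on $X_L\otimes_L L[\bx]=X\otimes_K L[\bx]$ comes entirely from the action on the coefficient ring $L[\bx]$ (which fixes $K[\bx]$), anything of the form $Z_0\otimes_K L[\bx]$ is automatically stable.

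For the converse, I would first cover $X$ by affine opens $U=\Spec A$. The intersection $Z\cap(U\otimes_K L)$ is a closed subscheme of $\Spec(A\otimes_K L)$ and so corresponds to an ideal $I\subset A\otimes_K L$. The $\HG(L/K)$-stability hypothesis restricted to $U\otimes_K L[\bx]=\Spec(A\otimes_K L[\bx])$ says precisely that the sub-$L[\bx]$-module $I\otimes_L L[\bx]\subset A\otimes_K L[\bx]$ is stable under the natural $\HG(L/K)$-action. Applying Theorem~\ref{de-svs} to the $K$-vector space $V=A$ and the sub-$L$-vector space $W=I\subset V_L=A\otimes_K L$, we obtain a $K$-subspace $J\subset A$ such that $I=J\otimes_K L$.

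Next I would check that $J$ is in fact an ideal of $A$. Concretely one can take $J=I\cap A$ (the equality $J=I\cap A$ follows from flatness of $L/K$), and then for $a\in A$ and $x\in J$, the product $ax$ lies in $I$ (since $I$ is an ideal of $A\otimes_K L$) and in $A$, hence in $J$. Equivalently, for each fixed $a\in A$ the multiplication map $V_L\to V_L$ is $\GHA(L/K)$-equivariant (as $a$ is $\GHA$-invariant) and preserves $I$, so by the uniqueness in Theorem~\ref{hopf} it preserves the $K$-form $J=V^{\GHA}\cap I$. Thus $J$ defines a closed subscheme $Z_U\subset U$ with $Z_U\otimes_K L=Z\cap(U\otimes_K L)$.

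The remaining step is to glue. On an overlap of two affine opens $U,U'$, both constructions produce closed subschemes of $U\cap U'$ whose base change to $L$ equals $Z\cap(U\cap U'\otimes_K L)$; by uniqueness of the $K$-form (Theorem~\ref{hopf}) applied to any affine open of $U\cap U'$ (which exists in abundance since $X$ is separated, so $U\cap U'$ is itself affine when $U,U'$ are), the two agree. Hence the local closed subschemes $Z_U$ glue to a closed subscheme $Z_0\subset X$ with $Z_0\otimes_K L=Z$, as required. The main subtlety, and the place where separatedness of $X$ enters cleanly, is precisely this compatibility check; everything else reduces mechanically to the vector-space statement of Theorem~\ref{de-svs}.
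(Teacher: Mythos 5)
Your proof is correct and follows essentially the same route as the paper's: reduce to the affine case using separatedness, apply Theorem~\ref{de-svs} to the defining ideal $I\subset A\otimes_KL$ to get a $K$-subspace $J$ with $I=J\otimes_KL$, and then observe that $J$ is automatically an ideal because closure under multiplication can be checked after the faithfully flat extension of scalars. Your extra care with the gluing step and the identification $J=I\cap A$ is a harmless elaboration of what the paper leaves implicit.
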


\begin{proof} If $Z$ descends to $K$ then it is clearly invariant under $\HG(L/K)$. 
As $X$ is separated, we can cover it with affine opens $\{U_i\}$ whose intersection are affine and hence reduce ourselves to the case where $X$ is affine with global sections $R$. Let $I$ be the defining ideal of $Z$ inside $R\otimes_KL$, then as a vector space it is $\HG(L/K)$-stable, as $Z$ is, hence by Theorem \ref{de-svs} there exists a sub-vector space $I_0$ of $R$ such that $I=I_0\otimes_KL$ as vector spaces. But $I_0$ is an ideal if (and only if) $I$ is one: the only property to check is the closure of $I_0$ under multiplication by an element $r\in R$, and it is enough to do it after extension of scalars.
\end{proof} 

\begin{remark}
Note that if $L/K$ is purely inseparable, then $\Spec L\to \Spec K$ is a universal homeomorphism, in particular the topological spaces underlying $X$ and $X\otimes_KL$ are homeomorphic for every $K$-scheme $X$. Hence the underlying topological space of every sub-scheme of $X\otimes_KL$ is ``defined'' over $K$ but so is not its algebraic structure.
\end{remark}

The descent of a morphism between separated schemes follows now from the previous case using the graph of such a morphism:
\begin{lemma}
Let $X$ and $Y$ two separated schemes over $K$, let $L$ be a modular extension of $K$. Then the image of the map
\[\circ_L:\Hom_K(X,Y)\to\Hom_L(X_L,Y_L)\]
sending $f$ to $f\otimes \id$ consists of all morphisms $g\in\Hom_L(X_L,Y_L)$ such that \[g\otimes \id:X_L\otimes_L L[\bx]\to Y_L\otimes_L L[\bx]\] is $\HG(L/K)$-equivariant.

\end{lemma}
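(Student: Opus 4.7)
The plan is to reduce this to Lemma~\ref{de-ss} by the classical graph construction. One inclusion is immediate: if $g=f\otimes\id_L$ then $g\otimes\id_{L[\bx]}=f\otimes\id_{L[\bx]}$, which the $\HG(L/K)$-action (acting only on the $L[\bx]$-factor) clearly preserves. For the non-trivial inclusion, I would start with $g\in\Hom_L(X_L,Y_L)$ whose base change $g\otimes\id_{L[\bx]}$ is $\HG$-equivariant and aim to produce $f\in\Hom_K(X,Y)$ with $g=f\otimes\id_L$.

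The key object is the graph $\Gamma_g\subset X_L\times_LY_L=(X\times_KY)\otimes_KL$, which is a closed subscheme since $Y$, hence $Y_L$, is separated. By standard nonsense, $\HG$-equivariance of $g\otimes\id_{L[\bx]}$ is equivalent to $\HG$-stability of its graph $\Gamma_g\otimes_LL[\bx]$ inside $(X\times_KY)\otimes_KL[\bx]$ under the diagonal action. Since $X\times_KY$ is separated over $K$ (as a product of separated schemes), Lemma~\ref{de-ss} then yields a closed subscheme $Z_0\subset X\times_KY$ with $Z_0\otimes_KL=\Gamma_g$.

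The remaining and subtlest step is to verify that $Z_0$ is actually the graph of a morphism $X\to Y$, i.e.\ that the first projection $\pi_X\colon Z_0\to X$ is an isomorphism; the desired $f$ will then be $\pi_Y\circ\pi_X^{-1}$. To this end I would remark that $\pi_X\otimes\id_L$ is, by construction, the projection $\Gamma_g\to X_L$, which is manifestly an isomorphism (with inverse $(\id,g)$), and then invoke faithfully flat descent: since $L/K$ is a field extension, hence faithfully flat, being an isomorphism descends from $L$ to $K$. Once $f$ is produced, the equality $\Gamma_f\otimes_KL=\Gamma_g$ immediately gives $f\otimes\id_L=g$.

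The main obstacle, such as it is, lies in this last descent-of-isomorphism step, but it is quickly handled by faithful flatness of $L/K$. Everything else is a routine translation of the descent of morphisms into the descent of their graphs, which is precisely what Lemma~\ref{de-ss} furnishes.
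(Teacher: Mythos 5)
Your proof is correct and follows essentially the same route as the paper: both reduce the descent of $g$ to the descent of its graph as a closed subscheme of $(X\times_KY)\otimes_KL$ via Lemma~\ref{de-ss}, using separatedness to know the graph is closed. You additionally spell out, via faithfully flat descent of isomorphisms, why the descended closed subscheme $Z_0$ is again the graph of a morphism $X\to Y$ --- a point the paper leaves implicit --- which is a welcome extra detail rather than a divergence.
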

\begin{proof} If $g$ is in the image of $\circ_L$, that is $g=f\otimes \id$ for some $f\in\Hom_K(X,Y)$ then $g\otimes \id$ is certainly $\HG$-equivariant. On the other hand, let $\Gamma_g$ be the graph of $g$, namely let $\Gamma_g:X_L\to X_L\times_{\Spec L} Y_L$  be the immersion given by $(id,g)$. Then $g=\pi_2\circ \Gamma_g$, and as $\pi_2$ is defined over $K$, it is enough to understand when $\Gamma_g$ descends to $K$. But as we are dealing with separated schemes, all graphs are closed immersion, so we are reduced to Lemma \ref{de-ss}.
\end{proof}

\subsection{Descending vector spaces}
If $L/K$ is a finite Galois extension, the classical Galois descent theory says that any $\sigma$-linear $\Gal(L/K)$-action on a $L$-vector space $V$ induces a $K$-form (in fact, it is a one on one correspondence thanks to Hilbert's Theorem 90).

As for sub-vector spaces, we would like to use Theorem \ref{hopf} in order to get a similar condition for modular extensions. Unlike the classical Galois descent case, though, Theorem \ref{hopf} does not gives a correspondence between $\GHA(L/K)$-semilinear actions on $V$ and $K$-forms, even though a description of $K$-forms can be given in terms of precocycles (see \cite[Thm.~1.2,8]{AS}) or of cocycle, if seen via faithfully flat descent (see \cite[Sec.~4]{des}). We will not be interested in describing the set of $K$-form, but rather deciding whether a $K$-form exists. Of course this is a trivial problem for vector spaces, but will be important when we endow the vector space with an algebraic structure.

\begin{definition} Let $V$ be a $L[\bx]$-module, we say that an action of $\HG(L/K)$ on $V$ is $\sigma$-linear if for every $a\in L[\bx]$ and $v,v'\in V$, and for every $\sigma\in\HG(L/K)$ one has
\[\sigma(v+v')=\sigma(v)+\sigma(v') \text{ and } \sigma(a\cdot v)=\sigma(a)\cdot \sigma(v).\]
\end{definition}

\begin{theorem}\label{de-vs}
Let $L$ be a modular normal finite extension over $K$ and let $V$ be a $L$-vector space endowed with a $\sigma$-linear action of $\HG(L/K)$ on $V\otimes_LL[\bx]$. Then the action of $\HG(L/K)$ induces a $L$-semilinear action of $\GHA(L/K)$ on $V$, in particular, $V$ admits a $K$-form.
\end{theorem}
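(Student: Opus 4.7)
The plan is, in analogy with the proof of Theorem~\ref{de-svs}, to use the distinguished elements $\phi_g$ and $\phi_i=\delta(\{d_i^{(k)}\})$ of $\HG(L/K)$ introduced there in order to extract, from the $\sigma$-linear action of $\HG$ on $V\otimes_L L[\bx]$, operators on $V$ realizing the generators of $\GHA(L/K)$. Once this is done, Theorem~\ref{hopf} delivers the $K$-form.

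First I would use the $L$-module decomposition $V\otimes_L L[\bx]=V\oplus V\bx\oplus\dots\oplus V\bx^{p^e-1}$ to define, for every $v\in V$ (identified with $v\otimes 1$) and every $\phi\in\HG(L/K)$, the expansion
\[\phi(v)=\sum_{k=0}^{p^e-1}\phi^{(k)}(v)\,\bx^k,\qquad \phi^{(k)}(v)\in V,\]
and then set $D_i^{(k)}\cdot v:=\phi_i^{(kp^{e-n_i})}(v)$ and $D_g\cdot v:=\phi_g^{(0)}(v)$ as the candidate actions of the generators of $\GHA$ on $V$. The $L$-semilinearity of $D_i^{(k)}$ follows by expanding both sides of the identity $\phi_i(a\cdot v)=\phi_i(a)\cdot\phi_i(v)$ coming from $\sigma$-linearity; comparing $\bx^m$-coefficients produces a Leibniz rule
\[D_i^{(m)}(a\cdot v)=\sum_{h+l=m}D_i^{(h)}(a)\cdot D_i^{(l)}(v),\]
which is exactly the condition $D\cdot(a\cdot v)=\Delta(D)(a\otimes v)$ for the coproduct in $\Ha_i$. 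An analogous (and simpler) verification works for $D_g$, whose action on $L[\bx]$ has no higher $\bx$-terms.

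Next I would verify that the resulting operators satisfy the defining relations of the Hopf algebra $\GHA=\Ha_0\otimes_K\Ha_1\otimes_K\dots\otimes_K\Ha_m$: the multiplicative rule $D_i^{(h)}D_i^{(k)}=\bin{h+k}{h}D_i^{(h+k)}$, the commutation of operators coming from different $\Ha_i$'s, and the group law in $K[G]$. These should follow from the group-theoretic compositions $\phi_i\circ\phi_j$ and $\phi_g\circ\phi_h$ inside $\HG$, which via the isomorphism $\delta$ of \ref{delta} correspond to the convolution product of higher derivations on $L$; commutativity for $i\neq j$ reflects the fact that the $\phi_i$ and $\phi_j$ act on distinct tensor factors of the modular decomposition \ref{dec}. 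Applying Theorem~\ref{hopf} to the resulting $L$-semilinear $\GHA$-action on $V$ then produces the desired $K$-form $V^{\GHA}$.

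The main obstacle is the second step above: faithfully translating the group structure of $\HG$ into the Hopf-algebra relations of $\GHA$ when acting on $V$. In particular, the relation $D_i^{(h)}D_i^{(k)}=\bin{h+k}{h}D_i^{(h+k)}$ has to be recovered from the convolution product on $\mathcal{H}^{\leq p^e-1}(L/K)$ applied to $v\in V$ and then unwound via the $\bx$-expansion, a computation requiring careful combinatorial bookkeeping analogous to, but more delicate than, the corresponding identities on $L$ itself.
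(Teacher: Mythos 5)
Your proposal is correct and follows essentially the same route as the paper: you define $D_i^{(k)}$ and $D_g$ by reading off the appropriate $\bx$-coefficients of $\phi_i(v)$ and $\phi_g(v)$ in the decomposition $V\otimes_LL[\bx]=\bigoplus_k V\bx^k$, derive semilinearity by expanding $\phi_i(a\cdot v)=\phi_i(a)\phi_i(v)$ and matching coefficients, deduce the Hopf-algebra relations from the corresponding group relations among the $\phi_i$, $\phi_j$, $\phi_g$ in $\HG(L/K)$, and conclude via Theorem~\ref{hopf}. The combinatorial bookkeeping you flag as the main obstacle is likewise left at the same level of detail in the paper's own argument.
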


\begin{proof}
The last statement follows from Theorem \ref{de-svs}. As for the first claim, let $e$ be the exponent of $L$ over $K$, and let us consider the decomposition of $L$ as in \eqref{dec}. 

 Fix a basis $v_s$ of $V$ over $L$, and let $\phi_i$ and $\phi_g$ be as in Thm.~\ref{de-svs}. Let $v$ be one of the $v_s$ and $1\leq i\leq m$, $0\leq k< p^{n_i}$, then we can write $\phi_i(v)=\sum_j\sum_s \gamma_j^sv_s \bx^j$, where by abuse of notation we use $v$ and $v_s$ instead of $v\otimes 1$ and $v_s\otimes 1$, respectively. Let us define 
 \begin{equation}\label{un}D_i^{(k)}(v)=\sum_s \gamma_{kp^{e-n_i}}^sv_s.
 \end{equation}
 Similarly, let us define
 \begin{equation}\label{du} D_g(v)=\sum_s \theta_0^sv_s,
\end{equation}
where $\phi_g(v)=\sum_j\sum_s\theta_j^sv_s\bx^j$.
It is evident that the action is additive and $K[\bx]$-linear, we have to check that this defines a semilinear $\GHA(L/K)$-action: namely that $V$ is an $\GHA(L/K)$-module under this action and that for every $D\in\GHA(L/K)$, $a\in L$ and $v\in V$ one has
\[D(a\cdot v)=\Delta(D)(a\otimes v).\]
Let us check first the second property: by additivity it is enough to do prove it for an element of the basis $v$ and for the $D_i^{(k)}$s and the $D_g$s, as they generate $\GHA(L/K)$. Let $\lambda\in L$, then 
\[\begin{split}
\phi_i(\lambda\cdot v)=&\phi_i(\lambda)\phi_i(v)\\
=&\big[\lambda+D^{(1)}(\lambda)\bx^{p^{e-n_i}}+\dotsb\big]\cdot\big[\sum_s \gamma_0^sv_s+\sum_s\gamma_{p^{e-n_i}}^sv_s\bx^{p^{e-n_i}}+\dotsb\big]\\
=&\lambda\cdot\sum_s\gamma_0^sv_s+[\sum_{j=0}^1\sum_sD^{(j)}(\lambda)\gamma_{(1-j)p^{e-n_i}}^sv_s]\bx^{p^{e-n_i}}+\dotsb
\end{split}
\]
and the semilinearity follows from the definition of the action and the fact that
\[\Delta(D_i^{(k)})=\sum_{h=0}^kD_i^{(h)}\otimes D_i^{(k-h)}.\]
 We can then use the very same argument with $\phi_g$ and $D_g$ instead of $\phi_i$ and $D_i^{(k)}$.
We are left to check that \eqref{un} and \eqref{du} define a $\GHA(L/K)$-module structure on $V$. To do so, it is enough to check that the relations between the $D_i^{(k)}$s and $D_g$s are satisfied by their image in $\End_K(V)$.
These relations are the following: for every $g\in G$, every $1\leq i,j\leq m$ and every $h\leq p^{n_i}$, $k\leq p^{n_j}$ one has
\[\begin{split}
[D_i^{(h)},D_g]&=0\\
D_gD_h&=D_{gh}\\
D_i^{(h)}D_j^{(k)}&=
\begin{cases}
\bin{h+k}{k} D_i^{(h+k)}&\text{if } h+k<p^{n_i} \text{ and } i=j\\
0&\text{else.}
\end{cases}
\end{split}
\]
for $s+t<p^{n_i}$ and zero otherwise. It is easy to see they follow from the fact that similar relations hold between $\phi_i$, $\phi_j$ and $\phi_g$ in $\HG(L/K)$ (and hence for their images in $End_{K[\bx]}(V\otimes_KK[\bx])$), together with the semilinearity we just proved.
\end{proof}

\subsection{Descending schemes}
Let as before $L/K$ be a finite normal modular extension, we can now use the results of the previous section to determine when an $L$-algebra admits a $K$-form, as an algebra.

\begin{definition} Let $R$ be a $L[\bx]$-algebra, we say that an action of $\HG(L/K)$ on $R$ is $\sigma$-linear if it for every $a,b\in R$ and for every $\sigma\in\HG(L/K)$ one has
\[\sigma(a+b)=\sigma(a)+\sigma(b)\text{ and }\sigma(a\cdot b)=\sigma(a)\cdot \sigma(b).\]
\end{definition}

Note that a $\sigma$-linear action on $R$ does not respect the $L[\bx]$-algebra structure but it does respect the induced $K$ and $K[\bx]$-algebra structure.

\begin{definition}
Let $R$ be an $L$-algebra, then an action of $\GHA(L/K)$ is called \emph{$L$-semilinear} if $R$ is an $\GHA(L/K)$-module for this action and for every  $a,b\in R$ and every $D\in\GHA(L/K)$ one has
\[D(a\cdot b)=\Delta(D)(a\otimes b).\]
\end{definition}

\begin{remark}We are here focusing on the category of schemes, but once the descent of vector spaces is established, descent results are also true for other kind of algebraic objects, such as Lie algebras or modules. Of course, we need to assume that the $\HG(L/K)$-action respects the algebraic structure we are interested in.
\end{remark}

\begin{theorem}\label{de-al}
Let $L$ be a modular finite extension over $K$ and let $R$ be a $L$-algebra endowed with a $\sigma$-linear action of $\HG(L/K)$ on the $L[\bx]$-algebra $R\otimes_LL[\bx]$. Then the action of $\HG(L/K)$ induces a $L$-semilinear action of $\GHA(L/K)$ on $R$, in particular, $R$ admits a $K$-form as an $L$-algebra.
\end{theorem}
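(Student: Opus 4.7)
The strategy is to reduce to Theorem~\ref{de-vs} and then upgrade the resulting vector space descent to an algebra descent. A $\sigma$-linear $\HG(L/K)$-action on the $L[\bx]$-\emph{algebra} $R\otimes_L L[\bx]$ is in particular a $\sigma$-linear action on the underlying $L[\bx]$-module, so Theorem~\ref{de-vs} already produces an $L$-semilinear action of $\GHA(L/K)$ on the underlying $L$-vector space of $R$, defined by the same formulas \eqref{un} and \eqref{du}. The content of the theorem is therefore to check two things: (a) this induced $\GHA(L/K)$-action is compatible with the multiplication on $R$, i.e.\ satisfies $D(a\cdot b)=\Delta(D)(a\otimes b)$; and (b) the resulting $K$-form $R^{\GHA}$ is actually a $K$-subalgebra, so that the isomorphism $R^{\GHA}\otimes_K L\to R$ given by Theorem~\ref{hopf} is an isomorphism of $L$-algebras.

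For (a), I would argue exactly as in the semilinearity computation at the end of the proof of Theorem~\ref{de-vs}, but with the scalar $\lambda\in L$ replaced by an element $b\in R$. Fix $a,b\in R$, set $\phi_i(a)=\sum_{j,s}\gamma_j^s v_s \bx^j$ and $\phi_i(b)=\sum_{j,s}\eta_j^s v_s \bx^j$. Since $\phi_i$ is a ring homomorphism on $R\otimes_L L[\bx]$, we have $\phi_i(a\cdot b)=\phi_i(a)\cdot\phi_i(b)$; expanding the product and reading off the coefficient of $\bx^{kp^{e-n_i}}$ on both sides gives precisely the Leibniz-type identity
\[
D_i^{(k)}(a\cdot b)=\sum_{h=0}^{k}D_i^{(h)}(a)\,D_i^{(k-h)}(b)=\Delta(D_i^{(k)})(a\otimes b),
\]
using the comultiplication formula for $D_i^{(k)}$. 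The same coefficient-comparison for $\phi_g$ (reading the $\bx^0$-component of $\phi_g(a)\phi_g(b)=\phi_g(ab)$) gives $D_g(ab)=D_g(a)D_g(b)$, which matches $\Delta(D_g)=D_g\otimes D_g$ since the $D_g$ are group-like.

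For (b), a direct Sweedler-style computation: if $a,b\in R^{\GHA}$, so $D(a)=\epsilon(D)a$ and $D(b)=\epsilon(D)b$ for all $D\in\GHA$, then by (a)
\[
D(ab)=\Delta(D)(a\otimes b)=\sum_{(D)}D_{(1)}(a)D_{(2)}(b)=\Big(\sum_{(D)}\epsilon(D_{(1)})\epsilon(D_{(2)})\Big)ab=\epsilon(D)ab,
\]
since $\epsilon$ is an algebra map and $(\epsilon\otimes\epsilon)\Delta=\epsilon$. Hence $R^{\GHA}$ is closed under multiplication (it obviously contains $1$ since $D(1)=\epsilon(D)$ by multiplicativity applied with $b=1$), so it is a $K$-subalgebra of $R$. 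The $L$-linear isomorphism $R^{\GHA}\otimes_K L\to R$, $r\otimes\lambda\mapsto\lambda r$, is then automatically an $L$-algebra isomorphism.

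The only real subtlety is keeping the notation straight in step (a): because the action of the $D_i^{(k)}$ is packaged inside $\phi_i$ with a shift by $p^{e-n_i}$, one must carefully match powers of $\bx$ when expanding the product $\phi_i(a)\phi_i(b)$. Once this bookkeeping is done (identically to the scalar case handled in the proof of Theorem~\ref{de-vs}), both the Leibniz rule and the stability of $R^{\GHA}$ under multiplication drop out, and the theorem follows by applying Theorem~\ref{hopf} to $R$ viewed as a $\GHA(L/K)$-semilinear $L$-algebra.
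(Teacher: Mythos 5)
Your overall strategy coincides with the paper's: the paper likewise disposes of the first claim with ``the proof goes similarly to Theorem~\ref{de-vs}'' and then only verifies that $R^{\GHA}$ is closed under multiplication, which it attributes to the interplay of multiplication and comultiplication on the generators $D_i^{(k)}$, $D_g$. Your part (b) is the clean version of that step --- the Sweedler computation $D(ab)=\sum D_{(1)}(a)D_{(2)}(b)=\epsilon(D)\,ab$ needs only the counit axiom, not the antipode the paper invokes --- and your part (a) is an attempt to make the ``similarly to Theorem~\ref{de-vs}'' explicit. So the route is the same; the paper is merely terser.

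There is, however, one point in (a) where the analogy with the scalar case is \emph{not} ``identical'', and it is exactly the bookkeeping you defer to your last paragraph. In the semilinearity computation of Theorem~\ref{de-vs}, the product $\phi_i(\lambda)\phi_i(v)$ has one factor, $\phi_i(\lambda)$ with $\lambda\in L$, which is supported on the powers $\bx^{jp^{e-n_i}}$ \emph{by construction} of $\phi_i=\delta(\{d_i^{(k)}\})$; this is what guarantees that the coefficient of $\bx^{kp^{e-n_i}}$ in the product involves only the coefficients of $\phi_i(v)$ in degrees $jp^{e-n_i}$, i.e.\ only the $D_i^{(h)}(v)$. For two general elements $a,b\in R$ no such support condition is given: writing $\phi_i(a)=\sum_j A_j\bx^j$ and $\phi_i(b)=\sum_j B_j\bx^j$ with $A_j,B_j\in R$, the coefficient of $\bx^{kp^{e-n_i}}$ in $\phi_i(a)\phi_i(b)$ is $\sum_{j'+j''=kp^{e-n_i}}A_{j'}B_{j''}$, which contains cross terms with $p^{e-n_i}\nmid j'$ that do not occur in $\sum_{h}D_i^{(h)}(a)D_i^{(k-h)}(b)$. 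When $n_i=e$ every degree is a multiple of $p^{e-n_i}$ and your coefficient comparison is complete; when $n_i<e$ you need an additional argument that these contributions drop out. To be fair, the paper's own proof does not address this either, so it is a gap you share with the source rather than one you introduced; but since you explicitly claim the bookkeeping is ``identical to the scalar case'', you should note that it is not, and this is the one step of your write-up that is not yet justified. (A minor quibble: $1\in R^{\GHA}$ follows most directly from $\phi(1)=1$ for every ring automorphism $\phi$, rather than from multiplicativity with $b=1$.)
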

\begin{proof}
The proof of the first claim goes similarly to Theorem~\ref{de-vs}, we only need to prove that if the action of $\GHA=\GHA(L/K)$ is $L$-semilinear on the $L$-algebra then 
\[R^{\GHA}=\{r\in R\mid D\cdot r=\epsilon(D)\cdot r, \;\forall D\in \GHA\}\]
 has an induced structure of $K$-algebra: by linearity it is enough to check the action on elements of the form $D_g \otimes D_1^{(k_1)} \otimes\dotsc\otimes D_m^{(k_m)}$, and thus (by the relation between multiplication and comultiplication on an Hopf algebra) on the $D_i^{(k)}$ and $D_g$ (with the same abuse of notation as in the proof of Theorem~\ref{de-svs}). On these element, though, this is clear due to the very definition of the antipode on the $\Ha_i$.
Hence, $R^{\GHA}$ is closed under multiplication (and is a $K$-algebra), so it follows that it is a $K$-form for the $L$-algebra $R$. Note that one could derive this last point from the general theory of \cite{AS} on the descent of algebraic structures but one needs to go through quite some notation in order to unravel it.
\end{proof}

Let finally $X$ be any separated scheme. Mimicking the Galois case, let $X_{L[\bx],\sigma}$ be the base change of $X$ to $L[\bx]$ via $\sigma\circ\iota$, with $\iota:L\hookrightarrow L[\bx]$ is the structure morphism.
\begin{definition} A \emph{$K$-descent data on $X$} is a collection of isomorphisms 
\[\phi_\sigma:X_{L[\bx],\sigma}\to X_{L[\bx],\id}\] for  $\sigma\in \HG(L/K)$, such that $\phi_\sigma\cdot \sigma^*(\phi_\tau)=\phi_{\sigma\tau}$.
\end{definition}

Then we have the following:

\begin{theorem}
Let $L/K$ be a modular normal finite field extension and let $X$ a separated $L$-scheme. Then $X$ is defined over $K$ (that is, there exists $X_0$ a $K$-scheme such that $X\simeq X_0\otimes_KL$ as $L$-schemes) if and only if there is a $K$-descent data on $X$ and an affine covering $\{U_i\}_{i\in I}$ of $X$ such that $\phi_\sigma(U_{L[\bx],\sigma})=U_{L[\bx],\id}$ for every $U\in\{U_i\}_{i\in I}$ and every $\sigma\in\HG(L/K)$.
\end{theorem}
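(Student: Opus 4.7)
My plan is to reduce the scheme-theoretic statement to the affine case handled by Theorem~\ref{de-al}, and then to glue the resulting affine $K$-forms using the descent-of-morphisms lemma proved just before Section~2.3. Separatedness of $X$ enters crucially, since it guarantees that pairwise (and triple) intersections of the $U_i$'s are again affine, so that Theorem~\ref{de-al} still applies on overlaps.

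The easy direction is immediate: if $X\simeq X_0\otimes_K L$ for some $K$-scheme $X_0$, then the action of $\sigma\in\HG(L/K)$ on the second factor of $X_0\otimes_K L[\bx]$ defines canonical isomorphisms $\phi_\sigma\colon X_{L[\bx],\sigma}\to X_{L[\bx],\id}$ satisfying the cocycle identity, and any affine covering of $X_0$ pulls back to an affine covering of $X$ preserved by the $\phi_\sigma$'s.

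For the converse I would proceed in three steps. First, on each affine $U_i=\Spec R_i$ of the given covering, the hypothesis $\phi_\sigma(U_{i,L[\bx],\sigma})=U_{i,L[\bx],\id}$ means that $\phi_\sigma$ restricts to a $\sigma$-linear automorphism of the $L[\bx]$-algebra $R_i\otimes_L L[\bx]$; the cocycle identity promotes these restrictions to a $\sigma$-linear $\HG(L/K)$-action, and Theorem~\ref{de-al} yields a $K$-form $R_{0,i}$ of $R_i$. I set $U_{0,i}:=\Spec R_{0,i}$. Second, because $X$ is separated, each $U_{ij}:=U_i\cap U_j$ is affine; the $\phi_\sigma$'s preserve $U_{ij,L[\bx]}$ automatically (as automorphisms of $X_{L[\bx]}$ preserving both $U_{i,L[\bx]}$ and $U_{j,L[\bx]}$), so Theorem~\ref{de-al} again produces a $K$-form $U_{0,ij}$, and similarly for triple intersections $U_{ijk}$. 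Third, the open immersions $U_{ij}\hookrightarrow U_i$ are $\HG(L/K)$-equivariant after base change to $L[\bx]$, because they come from the sheaf structure of $X$ which the $\phi_\sigma$'s globally respect; by the descent-of-morphisms lemma they therefore descend to $K$-morphisms $U_{0,ij}\to U_{0,i}$, which are open immersions since this property is preserved and reflected by the faithfully flat base change $\Spec L\to\Spec K$. The gluing cocycle for $\{U_i\}$ in $X$ then descends to a gluing cocycle for $\{U_{0,i}\}$, producing a $K$-scheme $X_0$ with $X_0\otimes_K L\simeq X$.

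The main obstacle I anticipate is the coherence of all these descents, namely verifying that the individually-constructed $K$-forms $R_{0,i}$, $R_{0,ij}$, $R_{0,ijk}$ fit together into a genuine gluing datum for $X_0$. This is resolved by the intrinsic nature of the $K$-form supplied by Theorem~\ref{de-al}: it is the subring of $\GHA(L/K)$-invariants, and passage to invariants is a functor on $\HG(L/K)$-equivariant ring maps, so compositions and restrictions of descended morphisms are automatic, and every cocycle relation on the $L$-side descends to the corresponding cocycle relation on the $K$-side.
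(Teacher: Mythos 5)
Your proposal is correct and follows essentially the same route as the paper: use separatedness to make all (pairwise) intersections affine, apply Theorem~\ref{de-al} to each $U_i$ and each $U_{ij}$, and glue the resulting $K$-forms via the globality of the $\phi_\sigma$'s. The only cosmetic difference is that the paper obtains the $\HG(L/K)$-action on $\Oh(U_i\cap U_j)$ through the surjection $\Oh(U_i)\otimes_L\Oh(U_j)\to\Oh(U_i\cap U_j)$, whereas you restrict the global $\phi_\sigma$ directly and invoke the descent-of-morphisms lemma for the inclusion maps; both are fine.
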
 

\begin{proof}
As $X$ is separated, $U\cap V$ is affine and moreover
\[\psi_{U,V}:\Oh(U)\otimes_L\Oh(V)\mapsto \Oh(U\cap V)\]
is surjective for every $U,V\in\{U_i\}_{i\in I}$. As $\phi_\sigma(U_{L[\bx],\sigma})=U_{L[\bx],\id}$, the descent data induces an action of $\HG(L/K)$ on $\Oh(U)$ for every $U\in\{U_i\}_{i\in I}$, hence, by Theorem~\ref{de-al}, a descent $U_0$ of $U$ for every $U\in\{U_i\}_{i\in I}$. As the $\phi_\sigma$ are global morphisms, this $\HG(L/K)$-action is compatible on the intersections, that is for every $U,V\in\{U_i\}_{i\in I}$, the morphism $\psi_{U,V}$ induces an $HG(L/K)$-action on $\Oh(U\cap V)$ which gives the gluing for $U_0$ and $V_0$.
\end{proof}

\section{The automorphism scheme}\label{ffd}

The goal of this section is to understand the role of the (truncated) automorphism group scheme in the framework of descent along modular field extensions and its connection with the Heerma--Galois group. Let $A$ be an algebraic object over $R$, that is an $R$-module endowed with some algebraic structure (for example a ring structure), and let $\Aut_R(A)$ be the subgroup of the automorphisms of $A$ as a module that respect the additional algebraic structure. Hence one can define the group valued functor $\underline{\Aut}(A/R)$ on every $A$-algebra $A'$ as
\[A'\mapsto \Aut_{A'}(R\otimes_A A').\]
We denote again by $\underline{\Aut}(A/R)$ the group scheme representing this functor, when it is representable.
If $L/K$ is a finite field extension, $\underline{\Aut}(L/K)$ is called the \emph{automorphism scheme of $L/K$} and was introduced by B\'egueri in \cite{beg}.

Recall that a \emph{truncated  $K$-scheme} is an affine schemes with global sections of the form $K[t_1,\dotsc,t_m]/(t_1^{n_1},\dotsc,t_m^{n_m})$ for some $m, n_i\in\mathbb{N}$. If $L/K$ is finite  by restricting ourselves to the category of truncated schemes we obtain the \emph{truncated automorphism scheme of $L/K$}, denoted by $\underline{\Aut}_t(L/K)$, as defined by Chase in \cite{ch}. Namely this $K$-group scheme represents the functor
\[T\mapsto \Aut_T(T\otimes_KL)\]
 in the category of truncated $K$-schemes. In particular, $\underline{\Aut}_t(L/K)$ is itself a truncated $K$-group scheme and it is a group sub-scheme of $\underline{\Aut}(L/K)$.

In case $L/K$ is Galois, the descent of algebraic objects follows quite directly from faithfully flat descent, thanks to the fact that if $L/K$ is Galois of group $G$, then $L$ is a $\underline{G}$-torsor, where $\underline{G}=\underline{\Aut}(L/K)$ denotes the constant $K$-group scheme associated to $G$. Descent along torsors (see for example \cite[Sec.~14.21]{goe}) implies then that if $M$ is a $L$-module, it is equivalent to give a descent data on $M$ and a $G$-action on $M$ (or, equivalently, a $K[G]^*$-comodule structure, where $K[G]^*$ is the Cartier dual of the group algebra $K[G]$). It is an easy fact, that $L/K$ is Galois of group $G$ if and only if it is a torsor under a constant group scheme, which then can only be $\underline{G}$.

If, on the other hand, we are dealing with purely inseparable extensions, the following holds:
\begin{lemma}[{\cite[Prop.~5.2]{ch}}]\label{torsor}
A finite field extension $L/K$ is normal and purely inseparable if and only if 	$L$ is a torsor under some truncated $K$-group scheme. 
\end{lemma}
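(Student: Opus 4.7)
\emph{Backward direction.} Suppose $L$ is a torsor under a truncated $K$-group scheme $G$. The torsor condition gives an isomorphism of $L$-schemes
\[G \times_K \Spec L \simeq \Spec L \times_K \Spec L.\]
Since $G$ is truncated its coordinate ring is a local artinian $K$-algebra, so the underlying space of $G$ (and hence of $G \times_K \Spec L$) is a single point. It follows that $L \otimes_K L$ is local. But the number of connected components of $\Spec(L \otimes_K L)$ equals the separable degree $[L:K]_s$, so locality forces $[L:K]_s = 1$, i.e., $L/K$ is purely inseparable. Normality is then automatic, since for each $\alpha \in L$ the minimal polynomial over $K$ is of the form $T^{p^m} - a$ and has $\alpha$ as its unique root in $\bar K$.

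\emph{Forward direction.} Assume $L/K$ is finite purely inseparable (so normality is automatic). As candidate take $G = \underline{\Aut}(L/K)$, representable by a finite $K$-group scheme according to \cite{beg}. I must check (i) that $G$ is truncated, and (ii) that the action morphism
\[\mu \colon G \times_K \Spec L \to \Spec L \times_K \Spec L, \qquad (g,x) \mapsto (gx,x),\]
is an isomorphism. Part (i) is immediate: any $\bar K$-automorphism of $L$ over $K$ must fix each $\alpha \in L$ (by the unique-root argument), so $G(\bar K) = \{\id\}$; a finite $K$-group scheme with trivial geometric fibre is connected, hence local, hence truncated.

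For part (ii), both source and target of $\mu$ are finite $L$-schemes via projection to the second factor. The length of $\Spec L \times_K \Spec L$ over $L$ is $[L:K]$, and one must show the same for $G \times_K \Spec L$, i.e.\ that $\rk_K \Oh(G) = [L:K]$. Once the lengths agree, the isomorphism reduces to surjectivity of $\mu$ at the scheme-theoretic level, and this is the content of normality: it guarantees that the action of $G$ on $\Spec L$ is transitive after faithfully flat base change. In the modular case both ingredients are transparent from the Heerma--Galois identification $\underline{\Aut}_t(L/K)(K[\bx]) = \HG(L/K)$, together with the description \eqref{delta} of $\HG$ via higher derivations, which yields $|\HG(L/K)| = [L:K]$. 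The main obstacle is the non-modular case, where one must reduce by induction along a primitive tower of purely inseparable elementary extensions, checking at each step that the relative automorphism group has the expected rank and that normality propagates to ensure transitivity of the action; this is the heart of Chase's Proposition~5.2 in \cite{ch}.
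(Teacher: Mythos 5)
The paper does not prove this lemma at all: it is imported verbatim as \cite[Prop.~5.2]{ch}, so there is no internal argument to compare yours against. Judged on its own terms, your backward direction is essentially sound: a truncated group scheme has local artinian coordinate ring, so the torsor isomorphism forces $L\otimes_KL$ to be local, which rules out any nontrivial separable subextension (your parenthetical that the number of components \emph{equals} $[L:K]_s$ is only correct when $L_s/K$ is normal --- in general it is the number of Galois orbits of embeddings --- but all you need is that it is $1$ iff $[L:K]_s=1$, which holds).

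The forward direction, however, contains a genuine error. You take $G=\underline{\Aut}(L/K)$ and claim $G(\bar K)=\{\id\}$ because an automorphism of $L$ over $K$ fixes every element. But $G(\bar K)$ is not $\Aut(L/K)$; by definition it is $\Aut_{\bar K}(L\otimes_K\bar K)$, the automorphism group of the \emph{base-changed algebra}. For $L/K$ purely inseparable this algebra is a non-reduced local artinian ring with an enormous automorphism group: already for $L=K(a^{1/p})$ one has $L\otimes_K\bar K\simeq\bar K[u]/(u^p)$, whose $\bar K$-automorphisms $u\mapsto c_1u+\dotsb+c_{p-1}u^{p-1}$, $c_1\neq 0$, form a $(p-1)$-dimensional group. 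So $\underline{\Aut}(L/K)$ is positive-dimensional, hence neither finite nor truncated, and your step (i) collapses; this is precisely the reason Chase replaces B\'egueri's automorphism scheme by the truncated subfunctor $\underline{\Aut}_t(L/K)$. With the correct candidate ($\underline{\Aut}_t(L/K)$, or $\GHA(L/K)^*$ in the modular case as in the paper's Example following the lemma), the remaining work --- computing $\rk_K\Oh(G)=[L:K]$ and proving the action map is an isomorphism, especially without a modularity assumption --- is exactly the content of Chase's proposition, and your sketch explicitly defers it rather than supplying it. So the forward direction is not established.
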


\begin{example}
For example $L/K$ is a torsor under $\GHA(L/K)^*$, which is indeed a truncated $K$-scheme if $L/K$ is purely inseparable. 
\end{example}

In the inseparable case, though, the situation is a little more involved than in the separable one, as there can be non isomorphic group schemes under which $L$ is a torsor, nevertheless there is a universal object under which all of these actions actions must factor: 
\begin{theorem}[{\cite[2.1(a)]{ch}}]\label{factor}
Let $L/K$ be a finite field extension, and $H$  a truncated $K$-group scheme acting on $L$. Then there exists a unique morphism \[\gamma_H:H\to \underline{\Aut}_t(L/K)\]of group schemes preserving the action of $H$ on $L$.
\end{theorem}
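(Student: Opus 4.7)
My plan is to construct $\gamma_H$ by Yoneda's lemma within the category of truncated $K$-schemes, turning the action of $H$ on $L$ into a natural transformation of group functors $H \to \underline{\Aut}_t(L/K)$.

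First I would rephrase an action of $H$ on $L$ as a coaction $\rho: L \to \mathcal{O}(H) \otimes_K L$ making $L$ an $\mathcal{O}(H)$-comodule algebra; equivalently, this is the datum of a functorial group action of $H(S)$ on $S \otimes_K L$ by $S$-algebra automorphisms for each $K$-algebra $S$. Explicitly, given a truncated $K$-algebra $S$ and $h \in H(S) = \Hom_{K\text{-alg}}(\mathcal{O}(H),S)$, I define
\[
\alpha_h : S \otimes_K L \xrightarrow{\mathrm{id}_S \otimes \rho} S \otimes_K \mathcal{O}(H) \otimes_K L \xrightarrow{\mathrm{id}_S \otimes h \otimes \mathrm{id}_L} S \otimes_K S \otimes_K L \xrightarrow{m_S \otimes \mathrm{id}_L} S \otimes_K L,
\]
which is an $S$-algebra endomorphism of $S \otimes_K L$; the associativity and unit axioms of the action force $\alpha_{h^{-1}}$ to be a two-sided inverse, so $\alpha_h \in \underline{\Aut}_t(L/K)(S)$.

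Second, I would check that $h \mapsto \alpha_h$ is natural in $S$ and compatible with group multiplication, so that it defines a morphism of group functors on truncated $K$-schemes. Since $H$ is itself a truncated $K$-group scheme and, by Chase's construction, $\underline{\Aut}_t(L/K)$ represents the functor $S \mapsto \Aut_S(S \otimes_K L)$ on this subcategory, Yoneda's lemma applied inside the truncated category produces a unique morphism of $K$-schemes $\gamma_H : H \to \underline{\Aut}_t(L/K)$ inducing this transformation; the group scheme axioms for $\gamma_H$ are inherited from the group axioms of the $S$-point assignment.

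The condition of \emph{preserving the action of $H$ on $L$} is exactly the prescription that, at the level of $S$-points, $\gamma_H(h)$ must act on $S \otimes_K L$ as $\alpha_h$; this pins down $\gamma_H$ uniquely by Yoneda. The only real subtlety is that $\underline{\Aut}_t(L/K)$ represents the automorphism functor only on \emph{truncated} test schemes, so one must be careful to invoke Yoneda within that subcategory — this is where the hypothesis that $H$ is truncated is indispensable, since without it the natural transformation on truncated test objects would not suffice to recover a scheme morphism out of $H$.
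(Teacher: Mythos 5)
Your argument is correct: rewriting the action as a comodule-algebra structure, evaluating it on truncated test algebras $S$ to get a natural transformation $H(S)\to\Aut_S(S\otimes_KL)$ of group-valued functors on the truncated category, and then invoking Yoneda together with the representability of that functor by $\underline{\Aut}_t(L/K)$ is exactly the standard route, and your closing remark correctly locates where the truncatedness of $H$ is used. Note that the paper itself offers no proof of this statement — it is imported verbatim from Chase \cite[2.1(a)]{ch} — so there is nothing internal to compare against; your proof is the expected one.
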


In particular, if $L/K$ is a finite normal modular extension let us write $L=K(\alpha)\otimes _KL'$, with $K(\alpha)$ Galois over $K$ with Galois group $G$ and $L'$ purely inseparable over $K$. Let $R$ be an algebraic object over $L$, if $R\simeq R_0\otimes_K L$ for some $R_0$ algebraic object over $K$, then there is a natural transformation of group functors
\begin{equation}\label{nat}
\eta^R:\underline{\Aut}(L/K)\to\underline{\Aut}(R/K)
\end{equation}
whose restriction to $\underline{G}\times\underline{\Aut}_t(L'/K)<\underline{\Aut}(L/K)$ will be denoted by $\eta_t^R$. On the other hand if we have such a natural transformation, and $H$ is as in Theorem~\ref{torsor} so that $L'/K$ is an $H$-torsor, then $L$ is a $\underline{G}\times H$-torsor, and and the pullback of $\eta_t^R$ via $\id\otimes \gamma_H$ (see Theorem~\ref{factor}) defines an $\underline{G}\times H$-equivariant structure on $R$ and hence (see \cite[14.21]{goe}) a descent for $R$ over $K$.

We have hence proven the following:
\begin{proposition}
Let $L/K$ be a finite normal modular extension of exponent less or equal than $e$ and let us denote $K[\bx]=K[X]/(X^{p^e})$. Let $R$ be an algebraic object over $L$, then the following are equivalent:
\begin{itemize}
\item[i)] there exists $R_0$ algebraic object over $K$ such that $R\simeq R_0\otimes_KL$ as $L$-algebraic objects;
\item[ii)] there exists a natural transformation of group valued functors 
\[\eta^R:\underline{\Aut}(L/K)\to\underline{\Aut}(R/K);\]
\item[iii)] there exists a group homomorphism 
\[\eta^R(K[\bx]):\underline{\Aut}(L/K)(K[\bx])\to\underline{\Aut}(R/K)(K[\bx])\]
that is a $K[\bx]$-linear group action of $\HG(L/K)$ on $R\otimes_KK[\bx]$ preserving the algebraic structure on $R$.
\end{itemize}
\end{proposition}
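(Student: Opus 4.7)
The plan is to establish the cycle $(i) \Rightarrow (ii) \Rightarrow (iii) \Rightarrow (i)$. The first two implications are formal and have already been foreshadowed in the discussion preceding the proposition; the substantive content lies in the last one, which I intend to reduce to Theorem~\ref{de-al}.

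For $(i) \Rightarrow (ii)$, I would take the construction recalled just before equation~\eqref{nat}: given an isomorphism $R \simeq R_0 \otimes_K L$, I define $\eta^R$ at each $K$-algebra $A$ by sending $\sigma \in \underline{\Aut}(L/K)(A) = \Aut_A(L \otimes_K A)$ to $\id_{R_0} \otimes \sigma \in \Aut_A(R \otimes_K A) = \underline{\Aut}(R/K)(A)$; naturality in $A$ is immediate from the functoriality of the tensor product. For $(ii) \Rightarrow (iii)$, I would simply evaluate $\eta^R$ at $A = K[\bx]$ and invoke the identification $\underline{\Aut}(L/K)(K[\bx]) = \HG(L/K)$ recorded in the introduction; the resulting homomorphism lands in $\underline{\Aut}(R/K)(K[\bx])$ by construction, which is exactly what (iii) demands.

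The core of the argument is $(iii) \Rightarrow (i)$, which I plan to deduce by a direct appeal to Theorem~\ref{de-al}. After the canonical identification $R \otimes_K K[\bx] = R \otimes_L L[\bx]$, the group homomorphism supplied by (iii) endows the $L[\bx]$-algebra $R \otimes_L L[\bx]$ with an action of $\HG(L/K)$ by $K[\bx]$-algebra automorphisms preserving the $R$-algebraic structure. Any such automorphism is in particular additive and multiplicative, and this is exactly the notion of a $\sigma$-linear action used in the paper; Theorem~\ref{de-al} (together with the remark following it, which extends the conclusion to other algebraic structures such as modules and Lie algebras) then yields the desired $K$-form for $R$.

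The main obstacle is not a deep geometric difficulty but a matter of translation: one must carefully match the $K[\bx]$-linear action from (iii), viewed as taking values in $\underline{\Aut}(R/K)(K[\bx])$, with the $\sigma$-linear action on the $L[\bx]$-algebra $R \otimes_L L[\bx]$ that Theorem~\ref{de-al} requires. Once it is observed that a $K[\bx]$-linear algebra automorphism is automatically $\sigma$-linear in the paper's sense, and that preservation of the $R$-algebraic structure supplies precisely the compatibility the non-ring variants of Theorem~\ref{de-al} need, the implication collapses to a single invocation of that theorem.
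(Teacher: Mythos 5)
Your cyclic argument $(i)\Rightarrow(ii)\Rightarrow(iii)\Rightarrow(i)$ is logically sound and each step is the right one, but it distributes the weight differently from the paper. The paper's Section~\ref{ffd} proves $(i)\Leftrightarrow(ii)$ directly and independently of the Heerma--Galois machinery: given $\eta^R$, one restricts it to $\underline{G}\times\underline{\Aut}_t(L'/K)$, pulls it back along $\id\otimes\gamma_H$ for a truncated group scheme $H$ making $L'/K$ a torsor (Lemma~\ref{torsor} and Theorem~\ref{factor}), and concludes by descent along torsors; the equivalence with $(iii)$ then rests on Theorem~\ref{de-al}, exactly as you say. In your version the torsor argument disappears entirely and $(ii)\Rightarrow(i)$ is obtained only through the $K[\bx]$-points. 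That is logically acceptable and arguably more economical, but it obscures the point the section is making, namely that $(ii)\Rightarrow(i)$ was essentially already known via torsors and that the new content is precisely that the single group of $K[\bx]$-points suffices. One caveat on your key step: when you assert that a $K[\bx]$-linear algebra automorphism is ``automatically $\sigma$-linear in the paper's sense,'' note that additivity and multiplicativity alone cannot be enough for Theorem~\ref{de-al} to produce a $K$-form --- the trivial action of $\HG(L/K)$ satisfies both --- so you must also use that $\eta^R(K[\bx])(\sigma)$ restricts to $\sigma$ on the scalars $L[\bx]\subset R\otimes_LL[\bx]$, which is the implicit content of the phrase ``group action of $\HG(L/K)$'' in $(iii)$ and of the semilinearity over $L[\bx]$ used in Theorem~\ref{de-vs}. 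With that compatibility made explicit, your reduction to Theorem~\ref{de-al} is exactly how the paper intends $(iii)\Rightarrow(i)$ to be read.
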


\begin{remark}
If $L/K$ is finite Galois, taking $e=0$ retrieves Galois descent.
\end{remark}

\section{A generalization}

Until now we have dealt with the action of $\HG(L/K)$ on the base change  to $L[\bx]$ of an $L$-algebraic object $R$. It can be useful to show that actually something slightly weaker is enough for $R$ to be defined over $K$, namely that it is enough for the $\HG(L/K)$-action to be defined over some infinitesimal deformation of $R$:
\begin{proposition}
Let $L/K$ be a finite normal modular extension of exponent less or equal than $1$ and let $V$ be a $K$-vector space, let $L[\bx]=L[X]/(X^p)$. Let $W$ be a sub-vector space of $V_L$, then $W\subset V_L$ is defined over $K$  if and only if there exists $\tilde{W}$ sub-module of $V_{L[\bx]}$ such that 
\begin{itemize}
\item[i)]  $\tilde{W}$ is free
\item[ii)] $\tilde{W}$ is invariant under the natural $\HG(L/K)$-action on $V_{L[\bx]}$
\item[iii)] $\tilde{W}\otimes_{L[\bx]}L=W$.
\end{itemize}
That is, $W\subset V_L$ is defined over $K$ if and only if it admits a  $\HG$-invariant deformation in $V_L$ over $L[\bx]$ which is free as $L[\bx]$-module.
\end{proposition}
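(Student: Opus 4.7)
The forward direction is immediate: if $W = W_0 \otimes_K L$ for a $K$-subspace $W_0 \subset V$, then $\tilde{W}:=W_0 \otimes_K L[\bx]$ is free, reduces to $W$ modulo $\bx$, and is manifestly $\HG(L/K)$-invariant, since the $\HG$-action is trivial on the $K$-factor. The content of the statement is in the converse; the plan is to show that the mere existence of such a $\tilde{W}$ forces $W$ itself to be $\GHA(L/K)$-stable inside $V_L$, whence Theorem~\ref{hopf} applied to $W \subset V_L$ supplies the desired $K$-form $W_0 := W \cap V$.

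I will first fix an $L$-basis $\{w_s\}$ of $W$; since $L[\bx]$ is local with residue field $L$ and $\tilde{W}/\bx\tilde{W}=W$, the $w_s$ lift to an $L[\bx]$-basis $\{\tilde{w}_s\}$ of $\tilde{W}$ of the form $\tilde{w}_s = w_s + \bx r_{s,1} + \dotsb + \bx^{p-1}r_{s,p-1}$ with $r_{s,k}\in V_L$. For each $i=1,\dotsc,m$, let $\phi_i \in \HG(L/K)$ be the element $\delta(\{D_i^{(k)}\})$ from the proof of Theorem~\ref{de-svs}, so that $\phi_i(\lambda)=\sum_{k=0}^{p-1}D_i^{(k)}(\lambda)\bx^k$ for $\lambda\in L$. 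By the $\HG$-invariance of $\tilde{W}$ one can write $\phi_i(\tilde{w}_s)=\sum_t M_{st}\tilde{w}_t$ for unique $M_{st}\in L[\bx]$, and the fact that $\phi_i$ descends to the identity on $V_L$ forces $M_{st}\equiv \delta_{st}\pmod\bx$. Expanding both sides of this identity in powers of $\bx$ and comparing the coefficient of $\bx^1$, the terms $r_{s,1}$ cancel on both sides and one reads off
\begin{equation*}
D_i^{(1)}(w_s)=\sum_t N_{st}\,w_t \in W,
\end{equation*}
where $N_{st}\in L$ is the linear coefficient of $M_{st}$. For each $g$ in the separable Galois group, the analogous identity $\phi_g(\tilde{w}_s)=\sum_t M_{st}\tilde{w}_t$ reduced modulo $\bx$ (where now $\phi_g$ induces $g$ on $V_L$) immediately yields $g(w_s)\in W$, hence $g(W)=W$.

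The step where the hypothesis $e\le 1$ is essential is the following observation about $\GHA(L/K)=\Ha_0\otimes_K \Ha_1\otimes_K\dotsb\otimes_K \Ha_m$: the Hopf-algebra relation $D_i^{(h)}D_i^{(k)}=\bin{h+k}{h}D_i^{(h+k)}$ gives inductively $D_i^{(k)}=(D_i^{(1)})^k/k!$ for $0<k<p$, so each $\Ha_i$ is generated as a $K$-algebra by the single derivation $D_i^{(1)}$. The first-order information $D_i^{(1)}(W)\subset W$ therefore upgrades to $\Ha_i$-invariance of $W$ for every $i$, and combined with the Galois invariance this gives full $\GHA(L/K)$-stability of $W$ inside $V_L$. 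Theorem~\ref{hopf} then yields the $K$-form $W^{\GHA}=W\cap V$. I expect the only delicate point to be the book-keeping in the $\bx$-expansion at the first-order step: it is precisely the freeness of $\tilde{W}$ together with the unipotence of $\phi_i$ modulo $\bx$ (giving $M_{st}\equiv\delta_{st}\pmod\bx$) that make the correction terms $r_{s,1}$ cancel and let the relation $D_i^{(1)}(w_s)\in W$ be read off cleanly.
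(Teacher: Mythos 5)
Your proof is correct and follows essentially the same route as the paper's: reduce to showing $D_g(W)\subseteq W$ and $D_i^{(1)}(W)\subseteq W$, extract the latter from the $\bx^1$-coefficient of the $\HG(L/K)$-invariance of $\tilde{W}$ (which is exactly where freeness enters), and then upgrade via $D_i^{(k)}=(D_i^{(1)})^k/k!$ and Theorem~\ref{hopf}. The only difference is organizational: the paper isolates the role of freeness in a separate ``$\bx$-saturation'' lemma (Lemma~\ref{prep}) applied to $\phi_i(w)-w\in\bx\cdot V_{L[\bx]}$, whereas you perform the equivalent coefficient comparison inline after lifting an $L$-basis of $W$ to an $L[\bx]$-basis of $\tilde{W}$.
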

\begin{proof}
One direction of the proposition follows easily from Theorem~\ref{de-svs}, simply taking $\tilde{W}$ to be $W\otimes_LL[\bx]$. Before proving the other direction we need one preparation lemma:
\begin{lemma}\label{prep}
Let $V$ be a $L$-vector space, let $L[\bx]=L[X]/(X^N)$ and let $W$ be a free sub-module of $V_{L[\bx]}=V\otimes_LL[\bx]$. Then $W$ is \emph{$\bx$-saturated}, by which we mean that if $v\cdot \bx^n\in W-\{0\}$ for some $v\in V_{L[\bx]}$  and $n<N$, then the image $\overline{v}$ of $v$ via the projection on $(\bx)$ is contained in $\overline{W}=W\otimes_{L[\bx]}L\subset V$.
\end{lemma}
\begin{proof}
Let us consider the decomposition $V_{L[\bx]}=V\cdot 1\oplus\dotsc\oplus V\cdot \bx^{N-1}$ and let us fix a basis $\{w_i\}_{i\in I}$ of $W$. Then for every $i\in I$
\[w_i=\sum_b v_{i,b}\bx^b\]
with $v_{i,b}\in V$.
As $W$ is free, the images of $w_i$  in $\overline{W}=W\otimes_{L[\bx]}L\subset V$ (that we can identify with $v_{i,0}$) form a basis of $\overline{W}$.
Let now $v\in V$ such that $v\otimes \bx^n$ is a non zero element of $W$. If $\bar{v}=0$ there is nothing to prove, otherwise let $\lambda_i\in L[\bx]$ such that
\[v\otimes\bx^n=\sum_iw_i\lambda_i.\]
Let us write $\lambda_i=\sum_a\lambda_{i,a}\bx^a$ with $\lambda_{i,a}\in L$, then
\[v\otimes\bx^n=\sum_j\bx^j\cdot\big[\sum_{a+b=j}\lambda_{i,a}v_{i,b}\big],\]
and in particular it follows that for every $j<n$, one has $\sum_{a+b=j}\lambda_{i,a}v_{i,b}=0$.
For $j=0$, this implies that $\sum_i \lambda_{i,0}v_{i,0}=0$, hence, as the $v_{i,0}$s are linearly independent, that $\lambda_{i,0}=0$ for every $i\in I$. But then by induction on $a$ the same argument shows that $\lambda_{i,a}=0$ for every $i\in I$ and $a<n$. In particular if we define 
\[\mu_i=\sum_a^{N-n-1}\lambda_{i,a+n}\bx^a\]
 it follows that $v'=v-\sum_i\mu_iw_i\in\bx\cdot L[\bx]$ and $v+v'\in W$, hence in particular $\overline{v+v'}=\overline{v}\in\overline{W}$.
\end{proof}

We are ready now to prove the reverse implication of the proposition, let hence $\tilde{W}$ be a free sub-module of $V_{L[\bx]}$ which is $\HG(L/K)$-invariant and such that its closed fiber $\tilde{W}\otimes_{L[\bx]}L$ equals $W$. In order to prove that $W$ is defined over $K$, by Theorem~\ref{hopf} it is enough to prove that $W$ is invariant under the natural $\GHA(L/K)$-action on $V_L$.

It is hence enough to check that $D_i^{(k)}(W)\subset W$ and that $D_g(W)\subset W$ for all $i,g$ as in the proof of Theorem~\ref{de-svs} and $k<p$. As $D_i^{(k)}=k!D_i^{(1)}$ for $k<p$, it is moreover enough to do the check for $k=1$.
 
Let us fix a basis $w_t$ of $W$ and a basis $v_s$ of $V$. Let us fix $w=w_t$ for some $t$ and write $w=\sum_s v_s\lambda_s$, with $\lambda_s\in L[\bx]$ and the usual abuse of notation $v_s$ for $v_s\otimes 1$. 

Let $\phi_i$ and $\phi_g\in\HG(L/K)$ be as in the proof of Theorem~\ref{de-svs}, then by hypothesis 
\[\phi_g(w)=\sum_s v_s \big[\sum_ag(\lambda_{s,a})\bx^a\big]\]
is in $W$, where we decompose $\lambda_s=\sum_a \lambda_{s,a}\bx^a$ with $\lambda_{s,a}\in L$. In particular $D_g(\bar{w})=\overline{\phi_g(w)}\in\overline{W}$ and hence $D_g(W)\subset W$. 
As for the $D_i^{(k)}$, we have that 
\[\begin{split}\phi_i(w)-w&=\sum_s v_s D_i^{(1)}(\lambda_s)\bx+\dotsc+\sum_sv_s D_i^{(k)}(\lambda_s)\bx^k+\dotsc\\
&=\bx\cdot\sum_s v_s D_i^{(1)}(\lambda_{s,0})+\bx^2\cdot \tilde{v}
\end{split}\]
is in $W$, with $\tilde{v}\in V_{L[\bx]}$.  In particular by Lemma~\ref{prep} $D_i^{(1)}(\bar{w})=\sum_s v_sD_i^{(1)}(\lambda_{s,0})\in \overline{W}$, hence $D_i^{(1)}(W)\subset W$ for every $i$.
\end{proof}

Note that even though the previous Proposition is proved only for exponent $1$, one can  use it as an induction step to get similar results for field extension of higher exponent.

\begin{remark}As in the case of Theorem~\ref{de-svs}, one can use the previous proposition to descend vector spaces endowed with additional algebraic structure, like ideals, Hopf-ideals or to descend morphisms between two objects defined over $K$, following the ideas of Section~\ref{sec:desc}.
\end{remark}

	\section{Open questions}

In order for the descent theory we described in this article to fully extend the classical Galois descent theory, there are still some open questions that need to be answered.

The first one concerns infinite (algebraic) extensions: if $L/K$ is Galois of infinite degree, then $G=\Aut(L/K)$ still provides a Galois correspondence by endowing $G$ with the Krull topology. If $X$ is a $K$-variety, and $\bar{K}$ is an algebraic closure of $K$, this allows for example to understand which sub-varieties of $X_{\bar{K}}=X\otimes_K \bar{K}$ are defined over $K$ just by looking at the natural action of the absolute Galois group of $K$ on the closed points of $X_{\bar{K}}$.
The Heerma--Galois group is defined and does work for any $L/K$ of finite exponent (even if the degree is infinite), thus leaving the following questions open:
\begin{question}
If $L/K$ is a modular normal field extension of infinite exponent, is it possible to define the Heerma--Galois group of such extension?
\end{question}

\begin{question}
Which topology should be then given on $\HG(L/K)$ in order to extend the Krull topology and to get descent for objects endowed with a continuous action of $\HG(L/K)$?
\end{question}

Another interesting point of view on descent theory comes from model theory. Namely, Galois correspondence has been has been generalized in this framework by Poizat \cite{poi} and then Pillay in \cite{pil} did the same for descent of constructible sets, working with a structure $\mathcal{M}$ (with some good properties) and its group of automorphisms fixing pointwise a subset $A$ of the universe $M$ of $\mathcal{M}$, thus obtaining descent for definable (that is, constructible) sets that are invariant under this group of automorphism (see \cite[Prop.~4.2]{pil}). This naturally leads to the following

\begin{question}
Is there a model theoretic proof of the descent results in this article? In which language should this be considered?
\end{question}

\addcontentsline{toc}{section}{\refname}
\printbibliography

\end{document}